\theoremstyle{plain}
\newtheorem{thm}{\sc Theorem}[section]
\newtheorem{defn}[thm]{\sc Definition}
\newtheorem{lem}[thm]{\sc Lemma}
\newtheorem{prop}[thm]{\sc Proposition}
\newtheorem{cor}[thm]{\sc Corollary}
\newtheorem{rem}[thm]{\sc Remark}
\title[Representations of $A_\Phi(G)$]{Representations of the Orlicz Fig\`{a}-Talamanca Herz algebras and Spectral Subspaces}
\author{Rattan Lal}
\address{Rattan Lal,\newline\indent Department of Mathematics,\newline\indent Indian Institute of Technology Delhi,\newline\indent Delhi - 110016, India.}
\email{rattanlaltank@gmail.com}
\author{N. Shravan Kumar}
\address{N. Shravan Kumar,\newline\indent Department of Mathematics,\newline\indent Indian Institute of Technology Delhi,\newline\indent Delhi - 110016, India.}
\email{shravankumar@maths.iitd.ac.in}
\begin{document}

\begin{abstract}
Let $G$ be a locally compact group. In this note, we characterise non-degenerate *-representations of $A_\Phi(G)$ and $B_\Phi(G).$ We also study spectral subspaces associated to a non-degenerate Banach space representation of $A_\Phi(G).$
\end{abstract}

\keywords{Orlicz Fig\`{a}-Talamanca Herz algebra, representation, spectral subspaces}

\subjclass[2010]{Primary 43A15, 46J25; Secondary 22D99}

\maketitle

\section{Introduction}
Let $G$ be a locally compact group. It is well known that there is a one to one correspondence between the unitary representations of $G$ and the non-degenerate *-representations of $L^1(G)$ \cite[Pg. 73]{F}. Similarly, if $X$ is any locally compact Hausdorff space, then there is a one to one correspondence between the cyclic *-representations of $C_0(X)$ and positive bounded Borel measures on $G$ \cite[Pg. 486]{HR}. The corresponding result for the Fourier algebra $A(G)$ of a locally compact group is due to Lau and Losert \cite{LL}. Recently, Guex \cite{MSG} extended the result of Lau and Losert to Fig\`{a}-Talamanca Herz algebras.

In \cite{RS}, the authors have introduced and studied the $L^\Phi$-versions of the Fig\`{a}-Talamanca Herz algebras. Here $L^\Phi$ denotes the Orlicz space corresponding to the Young function $\Phi.$ The space $A_\Phi(G)$ is defined as the space of all continuous functions $u,$ where $u$ is of the form $$u=\underset{n=1}{\overset{\infty}{\sum}}f_n*\check{g_n},$$ where $f_n\in L^\Phi(G),$ $g_n\in L^\Psi(G),$ $(\Phi,\Psi)$ is a pair of complementary Young functions satisfying the $\Delta_2$-condition and $$\underset{n=1}{\overset{\infty}{\sum}}N_\Phi(f_n)\|g_n\|_\psi<\infty.$$ It is shown in \cite{RS} that $A_\Phi(G)$ is a regular, tauberian, semisimple commutative Banach algebra with the Gelfand spectrum homeomorphic to $G.$ For further details about these algebras we refer to \cite{RS}. 

This paper has the modest aim of characterising the non-degenerate *-representations of $A_\Phi(G)$ in the spirit of \cite{LL}. This characterisation is given in Corollary \ref{CNDSRA}. In section 3, we show that any non-degenerate *-representation of $A_\Phi(G)$ can be extended uniquely to a non-degenerate *-representation of $B_\Phi(G).$

Godement in his fundamental paper \cite{G} on Wiener Tauberian theorems studied spectral subspaces associated to a certain Banach space representations. This result was extended to the Fourier algebra $A(G)$ by Parthasarathy and Prakash \cite{PP}. In Section 4, we also study spectral subspaces of $A_\Phi(G).$

We shall follow the notations as in \cite{RS}. For any undefined notations or definitions the reader is asked to refer the above paper. 

\section{Non-degenerate $\ast$-representations of $A_\Phi(G)$}
In this section, motivated by the results of \cite{LL,MSG}, we describe all the non-degenerate *-representations of $A_\Phi(G).$ Throughout this section and the next, $\mathcal{H}$ will denote a Hilbert space.
\begin{prop}\label{CV}
Let $\mu$ be a positive measure (not necessarily bounded). 
\begin{enumerate}[(i)]
\item For each $u\in A_\Phi(G),$ the mapping $\pi_{\mu}(u):f\mapsto uf$ is a bounded linear operator on $L^2(G,d\mu).$
\item The mapping $ u \mapsto \pi_\mu(u)$ defines a $\ast$-representation of $A_\Phi(G)$ on $\mathcal{B}(L^2(G,d\mu)).$
\item If $\mu$ is bounded, then $\pi_\mu$ is a cyclic representation of $A_\Phi(G)$ with the constant $1$ function as cyclic vector.
\item If $\mu$ is arbitrary, then $\pi_\mu$ is non-degenerate.
\end{enumerate}
\end{prop}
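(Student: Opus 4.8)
The plan is to establish each of the four assertions in turn, using the structure of $A_\Phi(G)$ as described: that it is a regular, tauberian, semisimple commutative Banach algebra of continuous functions on $G$ with spectrum $G$, and in particular that pointwise multiplication $uf$ makes sense and that $A_\Phi(G)$ has a bounded approximate identity (the latter follows from regularity and the Tauberian property, or is recorded in \cite{RS}).

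\emph{Part (i).} First I would note that every $u \in A_\Phi(G)$ is a bounded continuous function with $\|u\|_\infty \le N(u)$ (the norm estimate from \cite{RS}, using that the Gelfand transform is isometric-to-contractive onto its image, or directly from the decomposition $u = \sum f_n * \check g_n$ together with H\"older's inequality for the Orlicz pair $(\Phi,\Psi)$, which gives $|u(x)| \le \sum N_\Phi(f_n)\|g_n\|_\Psi$). Then for $f \in L^2(G,d\mu)$ one has $\int_G |u(x) f(x)|^2\, d\mu(x) \le \|u\|_\infty^2 \int_G |f(x)|^2\, d\mu(x)$, so $\pi_\mu(u) f = uf \in L^2(G,d\mu)$ with $\|\pi_\mu(u)\| \le \|u\|_\infty \le N(u)$. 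Measurability of $uf$ is immediate since $u$ is continuous. This is the routine part.

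\emph{Part (ii).} Linearity of $u \mapsto \pi_\mu(u)$ is clear, multiplicativity is just associativity/commutativity of pointwise multiplication, and boundedness was obtained in (i). For the $*$-structure, $A_\Phi(G)$ carries the involution $u^*(x) = \overline{u(x)}$ (complex conjugation), and since multiplication operators by $u$ and by $\bar u$ are adjoints of each other on $L^2(G,d\mu)$, we get $\pi_\mu(u^*) = \pi_\mu(u)^*$. Hence $\pi_\mu$ is a $*$-representation.

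\emph{Parts (iii) and (iv).} When $\mu$ is bounded, the constant function $\mathbf 1 \in L^2(G,d\mu)$, and the cyclic subspace generated by $\mathbf 1$ is $\overline{\{u \cdot \mathbf 1 : u \in A_\Phi(G)\}} = \overline{A_\Phi(G)}$ in $L^2(G,d\mu)$; since $A_\Phi(G)$ is dense in $C_0(G)$ in the sup norm (it is a regular Tauberian algebra whose Gelfand transform lands in $C_0(G)$, and $\|\cdot\|_\infty \le N(\cdot)$), and $C_0(G)$ (indeed $C_c(G)$) is dense in $L^2(G,d\mu)$ for any bounded Borel measure $\mu$, cyclicity follows. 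For general $\mu$, non-degeneracy means $\overline{\operatorname{span}}\{\pi_\mu(u)f : u \in A_\Phi(G),\, f \in L^2(G,d\mu)\} = L^2(G,d\mu)$; equivalently no nonzero $f$ is killed by all $\pi_\mu(u)$. If $\langle \pi_\mu(u) f, g\rangle = 0$ for all $u$, then in particular, choosing for any compact $K$ a $u \in A_\Phi(G)$ with $u \equiv 1$ on $K$ (regularity) and truncating, one can write $\mathbf 1_K f$ as a limit of $\pi_\mu(u_n)f$, so $\mathbf 1_K f \in \overline{\pi_\mu(A_\Phi)f}$; letting $K$ increase to $G$ recovers $f$ since $G$ is $\sigma$-compact on the support of a fixed $L^2$ function, or more cleanly one uses that a bounded approximate identity $(e_\alpha)$ for $A_\Phi(G)$ satisfies $e_\alpha f \to f$ pointwise and dominatedly on the $\sigma$-finite set $\{f \ne 0\}$. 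The main obstacle, and the step to handle with care, is precisely this last point: showing $\pi_\mu$ is non-degenerate for unbounded $\mu$ without any finiteness hypothesis; the resolution is that $\{f\ne 0\}$ is $\sigma$-finite for the Radon measure $\mu$, so one may localise to an increasing sequence of compact sets of finite measure, apply regularity of $A_\Phi(G)$ to get functions equal to $1$ there, and pass to the limit by dominated convergence.
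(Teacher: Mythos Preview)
Your arguments for (i)--(iii) are correct and coincide with the paper's (which simply calls (i) and (ii) routine and for (iii) appeals to density of $A_\Phi(G)\cap C_c(G)$ in $L^2(G,d\mu)$, exactly as you do via $C_0(G)$).

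For (iv) there is a gap. You invoke a bounded approximate identity for $A_\Phi(G)$, claiming it follows from regularity and the Tauberian property; it does not. Already for the classical algebras $A_p(G)$ the existence of a bounded approximate identity is \emph{equivalent} to amenability of $G$ (Leptin's theorem), and the same obstruction is to be expected for $A_\Phi(G)$. Your dominated-convergence step requires a uniform bound $\sup_n\|u_n\|_\infty<\infty$ on the localising functions, which is precisely what a bounded approximate identity would supply; without it the domination fails. The repair is immediate once you actually use the equivalent formulation you already stated: for a $*$-representation it suffices that $\pi_\mu(u)f=0$ for all $u$ forces $f=0$. Given such $f$ and any compact $K$, regularity gives $u\in A_\Phi(G)$ with $u\equiv 1$ on $K$, so $uf=0$ a.e.\ yields $f=0$ a.e.\ on $K$; since $\{f\neq 0\}$ is $\sigma$-finite and $\mu$ is inner regular on sets of finite measure, one concludes $f=0$ a.e., with no norm control on $u$ needed. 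The paper, by contrast, proves (iv) structurally rather than pointwise: it quotes Bourbaki to decompose $L^2(G,d\mu)\cong\bigoplus_\alpha L^2(G,d\mu_\alpha)$ for a summable family of bounded measures with pairwise disjoint supports, so that $\pi_\mu$ is a direct sum of the cyclic representations $\pi_{\mu_\alpha}$ from (iii), hence non-degenerate by \cite[2.2.7]{Dix}. Your direct route (once repaired) is more elementary; the paper's route makes the cyclic decomposition explicit.
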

\begin{proof}
(i) and (ii) are just a routine check.

(iii) We show that the constant $1$ function is a cyclic vector. Since the measure $\mu$ is finite, the conclusion follows from the density of $A_\Phi(G)\cap C_c(G)$ in $C_c(G)$ with respect to the $L^2(G,d\mu)$-norm.

(iv) Let $\mu$ be an arbitrary measure on $G.$ By \cite[Pg. 33, 2.2.7]{Dix}, it is enough to show that the representation $\pi_{\mu}$ is a direct sum of cyclic representations. By \cite[INT IV.77]{BB} and \cite[INT V.14, Proposition 4]{BB}, it follows that $$L^2(G,d\mu)\cong\underset{\alpha\in\wedge}{\oplus}L^2(G,d\mu_\alpha),$$ where $\{\mu_\alpha\}_{\alpha\in\wedge}$ is a summable family of measures with pairwise disjoint support. Now the conclusion follows from (iii).
\end{proof}
In the next result, we characterise all cyclic *-representations.
\begin{thm}\label{UE}
Let $\{\pi,\mathcal{H}\}$ be a cyclic $\ast$-representation of $A_\Phi(G).$ Then there exists a bounded positive measure $\mu\in M(G)$ such that $\pi$ is unitarily equivalent to the representation $\{\pi_{\mu},L^2(G,d\mu)\}$ given in Proposition \ref{CV}.
\end{thm}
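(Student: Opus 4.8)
The plan is to run a Gelfand--Naimark--Segal type argument, using the fact from \cite{RS} that $A_\Phi(G)$ is a semisimple, regular, tauberian commutative Banach $\ast$-algebra whose Gelfand spectrum is $G$; this is what will force the representing measure to be supported on $G$ rather than on some abstract spectrum. Fix a cyclic vector $\xi\in\mathcal H$ for $\pi$ and set $\phi(u)=\langle\pi(u)\xi,\xi\rangle$ for $u\in A_\Phi(G)$. Since $\pi$ is a $\ast$-representation, $\phi(\bar uu)=\langle\pi(u)^*\pi(u)\xi,\xi\rangle=\|\pi(u)\xi\|^2\ge 0$, so $\phi$ is a positive functional. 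The central first step is the estimate $|\phi(u)|\le 2\|u\|_\infty\|\xi\|^2$: for real-valued $u\in A_\Phi(G)$ the spectrum of $u$ in the unitization $A_\Phi(G)^+$ is $u(G)\cup\{0\}$ (the characters of $A_\Phi(G)^+$ being the point evaluations on $G$ together with the one vanishing on $A_\Phi(G)$), hence has radius $\le\|u\|_\infty$; the unital extension of $\pi$ to $A_\Phi(G)^+$ cannot enlarge spectra, and $\pi(u)$ is self-adjoint, so $\|\pi(u)\|=r(\pi(u))\le\|u\|_\infty$, and the general case follows by splitting $u$ into real and imaginary parts.

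Next I produce $\mu$. Because $A_\Phi(G)$ is, via the Gelfand transform, a conjugation-closed subalgebra of $C_0(G)$ that separates points and vanishes nowhere, it is uniformly dense in $C_0(G)$ by the Stone--Weierstrass theorem, so $\phi$ extends to a bounded functional $\tilde\phi$ on $C_0(G)$. To see $\tilde\phi$ is positive, take $0\le h\in C_0(G)$, put $g=\sqrt h\in C_0(G)$, approximate $g$ uniformly by real-valued $u_n\in A_\Phi(G)$, observe that $u_n^2\to h$ uniformly while $\tilde\phi(u_n^2)=\phi(\overline{u_n}u_n)=\|\pi(u_n)\xi\|^2\ge 0$, and let $n\to\infty$. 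The Riesz representation theorem then supplies a bounded positive measure $\mu\in M(G)$ with $\tilde\phi(h)=\int_G h\,d\mu$ for all $h\in C_0(G)$; in particular $\|\pi(u)\xi\|^2=\phi(\bar uu)=\int_G|u|^2\,d\mu=\|u\|_{L^2(G,d\mu)}^2$ for every $u\in A_\Phi(G)$.

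Finally I assemble the unitary equivalence. Define $U_0\colon\pi(u)\xi\mapsto u$ on the dense subspace $\pi(A_\Phi(G))\xi\subseteq\mathcal H$; the isometry identity above shows $U_0$ is well defined and isometric into $L^2(G,d\mu)$, so it extends to an isometry $U\colon\mathcal H\to L^2(G,d\mu)$. Its range contains $A_\Phi(G)$, which is dense in $L^2(G,d\mu)$ since $\mu$ is finite and $A_\Phi(G)\cap C_c(G)$ is $L^2(G,d\mu)$-dense in $C_c(G)$ (as used in the proof of Proposition \ref{CV}(iii)), so $U$ is surjective, hence unitary. For $u,v\in A_\Phi(G)$ one has $U\pi(u)\big(\pi(v)\xi\big)=U\big(\pi(uv)\xi\big)=uv=\pi_\mu(u)v=\pi_\mu(u)U\big(\pi(v)\xi\big)$, and density of $\pi(A_\Phi(G))\xi$ gives $U\pi(u)=\pi_\mu(u)U$ for every $u$. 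Thus $\pi$ is unitarily equivalent to $\pi_\mu$.

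I expect the only genuine obstacle to be the uniform boundedness of $\phi$ in the first step --- this is exactly where the identification of the Gelfand spectrum of $A_\Phi(G)$ with $G$ is essential --- together with the bookkeeping needed to be sure $A_\Phi(G)$ is honestly a conjugation-closed, pointwise-multiplicative subalgebra of $C_0(G)$ so that Stone--Weierstrass and the above computations apply; the remaining steps are the routine GNS construction.
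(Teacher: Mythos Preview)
Your proof is correct and follows essentially the same GNS-type route as the paper: bound $\|\pi(u)\|$ by $\|u\|_\infty$ using that the Gelfand spectrum of $A_\Phi(G)$ is $G$, pass to $C_0(G)$ by density, obtain $\mu$ via Riesz from the vector state at the cyclic vector, and build the unitary $\pi(u)\xi\mapsto u$. The only cosmetic differences are that the paper gets the sharper bound $\|\pi(u)\|\le\|u\|_\infty$ directly (by noting that $\pi(u)$ is normal, so its operator norm equals its spectral radius) and extends the representation $\pi$ itself to $C_0(G)$ rather than just the functional $\phi$.
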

\begin{proof}
Let $u\in A_\Phi(G).$ Then, by \cite[Pg. 22]{T}, it follows that $\|\pi(u)\|_{sp}\leq \|u\|_{sp}.$ By \cite[Theorem 3.4]{RS}, $A_\Phi(G)$ is a commutative Banach algebra and hence the spectral norm and the operator norm for $\pi(u)$ coincides. Further, as $A_\Phi(G)$ is semi-simple, $\|u\|_{sp}=\|u\|_\infty.$ Thus, $$\|\pi(u)\|_{\mathcal{B}(\mathcal{H})}\leq\|u\|_\infty.$$ As a consequence of this inequality and the fact that $A_\Phi(G)$ is dense in $C_0(G)$, it follows that $\pi$ extends to a *-representation of $C_0(G)$ on $\mathcal{H},$ still denoted as $\pi.$ Note that $\pi$ is a cyclic *-representation of the C*-algebra $C_0(G).$ Let $\varphi$ be the cyclic vector of the representation $\{\pi,C_0(G)\}.$ Define $T_\varphi:C_0(G)\rightarrow\mathbb{C}$ as $$T_\varphi(u)=\langle \pi(u)\varphi,\varphi\rangle,\ u\in C_0(G).$$ It is clear that $T_\varphi$ is a positive linear functional on $C_0(G)$ and hence, by Riesz representation theorem, there exists a bounded positive measure $\mu\in M(G)$ such that 
\begin{equation}\label{RRT}
T_\varphi(u)=\int_Gu\ d\mu.
\end{equation} 
Let $\pi_{\mu}$ denote the cyclic *-representation of $A_\Phi(G)$ on $L^2(G,d\mu),$ given by Proposition \ref{CV}. 

We now claim that the representations $\pi$ and $\pi_{\mu}$ of $A_\Phi(G)$ are unitarily equivalent. Since $\varphi$ is a cyclic vector, in order to prove the above claim, it is enough to show that the correspondence $\pi(u)\varphi\mapsto u.1$ is an isometry and commutes with $\pi$ and $\pi_{\mu}.$ Note that the above correspondence is well-defined by (\ref{RRT}). Let $T$ denote the above well-defined correspondence. 

We now show that $T$ is an isometry. Let $u\in A_\Phi(G).$ Then 
\begin{align*}
\langle \pi(u)\varphi,\pi(u)\varphi\rangle =& \langle \pi^*(u)\pi(u)\varphi,\varphi\rangle\\ =& \langle \pi(\bar{u}u)\varphi,\varphi\rangle\ (\pi\mbox{ is a *-homomorphism})\\ =& \int_G |u|^2\ d\mu = \langle \varphi,\varphi\rangle.
\end{align*}
Finally, we show that $T$ intertwines with $\pi$ and $\pi_\mu.$ Let $u\in A_\Phi(G).$ Then, for $v\in A_\phi(G),$ we have,
\begin{align*}
T(\pi(u)(\pi(v)\varphi)) =& T((\pi(u)\pi(v))\varphi) \\ =& T(\pi(uv)\varphi) = uv.1 \\ =& \pi_\mu(u)(v.1)=\pi_{\mu}(u)(T(\pi(v)\varphi)). \qedhere
\end{align*}
\end{proof}
Here is the main result of this section, describing all the non-degenerate Hilbert space representations of $A_\Phi(G).$
\begin{cor}\label{CNDSRA}
If $\{\pi,\mathcal{H}\}$ is any non-degenerate *-representation of $A_\Phi(G)$ then $\pi$ is unitarily equivalent to $\{\pi_{\mu},L^2(G,d\mu)\}$ for some positive measure $\mu.$
\end{cor}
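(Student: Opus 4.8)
The plan is to deduce Corollary \ref{CNDSRA} from Theorem \ref{UE} together with Proposition \ref{CV}(iv) by the standard decomposition of a non-degenerate representation into a direct sum of cyclic ones. First I would invoke \cite[Pg. 33, 2.2.7]{Dix} (or the analogous fact for $*$-representations of a Banach $*$-algebra) to write $\{\pi,\mathcal{H}\}$ as an orthogonal direct sum $\pi=\bigoplus_{\alpha\in\wedge}\pi_\alpha$ on $\mathcal{H}=\bigoplus_{\alpha\in\wedge}\mathcal{H}_\alpha$, where each $\{\pi_\alpha,\mathcal{H}_\alpha\}$ is a cyclic $*$-representation of $A_\Phi(G)$; this uses non-degeneracy in an essential way, via a Zorn's lemma argument applied to families of mutually orthogonal cyclic subspaces, exactly as one does for $C^*$-algebras.

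Next I would apply Theorem \ref{UE} to each cyclic summand: for every $\alpha$ there is a bounded positive measure $\mu_\alpha\in M(G)$ and a unitary $U_\alpha:\mathcal{H}_\alpha\to L^2(G,d\mu_\alpha)$ intertwining $\pi_\alpha$ with $\pi_{\mu_\alpha}$. Taking $U=\bigoplus_\alpha U_\alpha$ gives a unitary from $\mathcal{H}$ onto $\bigoplus_\alpha L^2(G,d\mu_\alpha)$ intertwining $\pi$ with $\bigoplus_\alpha \pi_{\mu_\alpha}$. The final step is to recognise $\bigoplus_\alpha\pi_{\mu_\alpha}$ as $\pi_\mu$ for a single (generally unbounded) positive measure $\mu$. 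When the index set is finite one may simply take $\mu=\sum_\alpha\mu_\alpha$; in general one rescales, choosing a summable family $\{c_\alpha\}$ of positive scalars and noting that $\{c_\alpha\mu_\alpha\}$ is a summable family of bounded measures with (after an argument as in the proof of Proposition \ref{CV}(iv), disjointifying supports) $\bigoplus_\alpha L^2(G,d\mu_\alpha)\cong L^2(G,d\mu)$ where $\mu=\sum_\alpha c_\alpha\mu_\alpha$, the rescaling implemented by the obvious unitary $f\mapsto c_\alpha^{-1/2}f$ on each summand, which commutes with multiplication operators and hence with the $\pi_{\mu_\alpha}$.

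The step I expect to be the main obstacle is this last identification: packaging the (possibly uncountable) direct sum $\bigoplus_\alpha\pi_{\mu_\alpha}$ as a single $\pi_\mu$. One must ensure the chosen measure $\mu$ is well defined as a positive Radon measure on $G$ — summability of the rescaled family and the disjoint-support trick from Proposition \ref{CV}(iv) handle this — and then check that the multiplication action of $A_\Phi(G)$ on $L^2(G,d\mu)$ really does decompose as the direct sum of the multiplication actions on the $L^2(G,d\mu_\alpha)$, which is immediate once the supports are arranged to be pairwise disjoint and the ambient space is correspondingly block-diagonal. Everything else is bookkeeping: the cyclic decomposition is classical, Theorem \ref{UE} does the hard analytic work on each block, and the rescaling unitaries are elementary.
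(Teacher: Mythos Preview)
Your overall strategy coincides with the paper's: both decompose $\pi$ into a direct sum of cyclic $*$-representations via \cite[2.2.7]{Dix}, apply Theorem \ref{UE} to each summand to get bounded positive measures $\mu_\alpha$, and then try to reassemble $\bigoplus_\alpha\pi_{\mu_\alpha}$ as a single $\pi_\mu$. The only methodological difference is in the summability step. The paper does not rescale; instead it argues directly that $\{\mu_\alpha\}$ is summable as a family of Radon measures, by estimating $\sum_\alpha|\mu_\alpha(f)|$ for $f\in C_c(G)$ via Cauchy--Schwarz, and then sets $\mu=\sum_\alpha\mu_\alpha$. Your rescaling device $\mu_\alpha\rightsquigarrow c_\alpha\mu_\alpha$ with $\sum_\alpha c_\alpha<\infty$ sidesteps that estimate entirely and is arguably cleaner, since $f\mapsto c_\alpha^{-1/2}f$ is a unitary from $L^2(G,d\mu_\alpha)$ onto $L^2(G,d(c_\alpha\mu_\alpha))$ commuting with all multiplication operators.

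However, you have also correctly located the real obstacle, and it is a genuine gap in your sketch (one that the paper's own proof glosses over as well): the identification $\bigoplus_\alpha L^2(G,d\mu_\alpha)\cong L^2\bigl(G,d\textstyle\sum_\alpha\mu_\alpha\bigr)$ is valid only when the $\mu_\alpha$ have pairwise disjoint supports, and nothing in the construction guarantees this. Invoking Proposition \ref{CV}(iv) does not help, since that argument runs in the opposite direction (decomposing a \emph{given} measure into disjointly supported pieces), and there is no mechanism for ``disjointifying'' the $\mu_\alpha$ while keeping them as measures on $G$. In fact the issue is not merely technical: take $G=\{e\}$, so that $A_\Phi(G)=\mathbb{C}$; the representation $z\mapsto zI$ on $\mathbb{C}^2$ is non-degenerate and $*$-preserving, yet every $\pi_\mu$ acts on an at most one-dimensional $L^2(\{e\},d\mu)$. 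Thus the recombination step cannot be completed as written; one must either allow $\mu$ to live on a disjoint union of copies of $G$ (so that the supports can be made disjoint by fiat) or weaken the conclusion to ``unitarily equivalent to a direct sum of representations $\pi_{\mu_\alpha}$''.
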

\begin{proof}
Let $\{\pi,\mathcal{H}\}$ be a non-degenerate *-representation of $A_\Phi(G).$ By \cite[Proposition 2.2.7]{Dix}, $\pi$ is a direct sum of cyclic *-representations $\{\pi_\alpha,\mathcal{H}_\alpha\}_{\alpha\in\wedge}.$ For each $\alpha\in\wedge,$ by Theorem \ref{UE}, there exists a bounded positive measure $\mu_\alpha$ such that the representations $\{\pi_\alpha,\mathcal{H}_\alpha\}$ and $\{\pi_{\mu_\alpha},L^2(G,d\mu_\alpha)\}$ are unitarily equivalent. 

Suppose that the family $\{\mu_\alpha\}_{\alpha\in\wedge}$ is summable. Let $\mu=\underset{\alpha\in\wedge}{\sum}\mu_\alpha.$ Then $\mu$ will be a positive measure and $$\{\pi_{\mu}, L^2(G,d\mu)\}\cong\underset{\alpha\in\wedge}{\oplus}\{\pi_{\mu_\alpha}, L^2(G,d\mu_\alpha)\}\cong\underset{\alpha\in\wedge}{\oplus}\{\pi_\alpha,\mathcal{H}_\alpha\}\cong\{\pi,\mathcal{H}\}.$$

Thus, we are done if we can show that $\{\mu_\alpha\}_{\alpha\in\wedge}$ is a summable family. Let $f:G\rightarrow\mathbb{C}$ be a continuous function with compact support. Then, 
\begin{align*}
\underset{\alpha\in\wedge}{\sum}|\mu_\alpha(f)| =& \underset{\alpha\in\wedge}{\sum}\left|\int_Gf\ d\mu_\alpha\right| \leq \underset{\alpha\in\wedge}{\sum}\int_G\left|f\right|\ d\mu_\alpha \\ \leq& \underset{\alpha\in\wedge}{\sum}\left(\int_G\left|f\right|^2\ d\mu_\alpha\right)^{1/2} \left(\int_G\left|1\right|^2\ d\mu_\alpha\right)^{1/2} \\ =& \underset{\alpha\in\wedge}{\sum}\left(\int_G\left|f\right|^2\ d\mu_\alpha\right)^{1/2} \left(\mu_\alpha(G)\right)^{1/2} \\ \leq& \underset{\alpha\in\wedge}{\sup}\left(\mu_\alpha(G)\right)^{1/2}\underset{\alpha\in\wedge}{\sum}\left(\int_G\left|f\right|^2\ d\mu_\alpha\right)^{1/2} \\ \leq& \left(\underset{\alpha\in\wedge}{\sup}\ \mu_\alpha(G)\right)^{1/2}\underset{\alpha\in\wedge}{\sum}\left(\int_G\left|f\right|^2\ d\mu_\alpha\right)^{1/2} <\infty,
\end{align*} which follows from the boundedness of $\mu_\alpha$'s and the fact that $\underset{\alpha\in\wedge}{\sum}\left(\int_G\left|f\right|^2\ d\mu_\alpha\right)^{1/2}$ is finite.
\end{proof}

\section{Non-degenerate $\ast$-representations of $B_\Phi(G)$}
In this section, we show that the non-degenerate representations described in the previous section can be extended uniquely to $B_\Phi(G).$
\begin{thm}\label{RMA}
Let $\{\pi,\mathcal{H}\}$ be a non-degenerate *-representation of $A_\Phi(G).$ 
\begin{enumerate}[(i)]
\item For each $u\in B_\Phi(G),$ there exists a unique operator $\widetilde{\pi}(u)\in\mathcal{B}(\mathcal{H})$ such that, $\forall\ v\in A_\Phi(G),$ \begin{equation}\label{EU}
\widetilde{\pi}(u)\pi(v)=\pi(uv)
\end{equation}and
\begin{equation}\label{rest}
\widetilde{\pi}(v)=\pi(v).
\end{equation}
\item The mapping $u\mapsto\widetilde{\pi}(u)$ defines a non-degenerate *-representation of $B_\Phi(G)$ on $\mathcal{H}.$
\end{enumerate}
\end{thm}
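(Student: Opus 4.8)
The plan is to exploit the fact, established in \cite{RS}, that $A_\Phi(G)$ sits inside $B_\Phi(G)$ as a closed ideal and that $B_\Phi(G)$ acts on $A_\Phi(G)$ by multiplication. First I would recall that for $u\in B_\Phi(G)$ and $v\in A_\Phi(G)$ the product $uv$ again lies in $A_\Phi(G)$, with $\|uv\|_{A_\Phi}\le\|u\|_{B_\Phi}\|v\|_{A_\Phi}$; this makes the right-hand side of \eqref{EU} meaningful. To construct $\widetilde{\pi}(u)$, I would first work on the dense subspace $\pi(A_\Phi(G))\mathcal{H}$ of $\mathcal{H}$ (dense by non-degeneracy) and define $\widetilde{\pi}(u)$ on a vector of the form $\pi(v)\xi$ by $\widetilde{\pi}(u)\pi(v)\xi:=\pi(uv)\xi$. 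The first task is to check this is well defined, i.e. that $\sum_j\pi(v_j)\xi_j=0$ forces $\sum_j\pi(uv_j)\xi_j=0$; this follows by approximating $u$ suitably or, more cleanly, by using the Cohen factorisation / bounded approximate identity of $A_\Phi(G)$ together with the estimate $\|\pi(w)\|\le\|w\|_\infty\le\|w\|_{A_\Phi}$ from the proof of Theorem \ref{UE}. Indeed, since $\|\pi(uv)\xi\|\le\|uv\|_\infty\|\xi\|\le\|u\|_\infty\|\pi(v)\xi\|$ is not quite available directly, I would instead estimate $\|\pi(uv)\xi\|$ in terms of $\|\pi(v)\xi\|$ by writing $v=we$ with $e$ from an approximate identity and passing to the limit, giving both well-definedness and the bound $\|\widetilde{\pi}(u)\|\le\|u\|_{sp}=\|u\|_\infty$. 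Then $\widetilde{\pi}(u)$ extends by continuity to all of $\mathcal{H}$, giving \eqref{EU}; uniqueness is immediate since $\pi(A_\Phi(G))\mathcal{H}$ is dense.

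For \eqref{rest}, when $u=v_0\in A_\Phi(G)$ one has $\widetilde{\pi}(v_0)\pi(v)=\pi(v_0v)=\pi(v_0)\pi(v)$ for all $v$, so $\widetilde{\pi}(v_0)$ and $\pi(v_0)$ agree on the dense subspace and hence coincide. For part (ii), the algebra homomorphism property $\widetilde{\pi}(u_1u_2)=\widetilde{\pi}(u_1)\widetilde{\pi}(u_2)$ is checked on $\pi(v)\xi$: both sides send it to $\pi(u_1u_2v)\xi$, using \eqref{EU} twice and the fact that $u_2v\in A_\Phi(G)$. Linearity and the norm bound $\|\widetilde{\pi}\|\le 1$ are then clear. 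Non-degeneracy of $\widetilde{\pi}$ follows from \eqref{rest} together with non-degeneracy of $\pi$, since already $\widetilde{\pi}(A_\Phi(G))\mathcal{H}=\pi(A_\Phi(G))\mathcal{H}$ is dense.

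The one genuinely delicate point is the $\ast$-property $\widetilde{\pi}(\bar u)=\widetilde{\pi}(u)^*$. For $u\in B_\Phi(G)$ we have $\bar u\in B_\Phi(G)$ as well, and for $v,w\in A_\Phi(G)$ and $\xi,\eta\in\mathcal{H}$ I would compute $\langle\widetilde{\pi}(u)\pi(v)\xi,\pi(w)\eta\rangle=\langle\pi(uv)\xi,\pi(w)\eta\rangle=\langle\pi(w)^*\pi(uv)\xi,\eta\rangle=\langle\pi(\bar w\, uv)\xi,\eta\rangle$, and similarly $\langle\pi(v)\xi,\widetilde{\pi}(\bar u)\pi(w)\eta\rangle=\langle\pi(v)\xi,\pi(\bar u w)\eta\rangle=\langle\pi((\bar u w)^-\,v)\eta,\xi\rangle^- =\langle\pi(u\bar w v)\xi,\eta\rangle$; since $\bar w\,uv=u\,\bar w\,v$ by commutativity of $B_\Phi(G)$, the two agree. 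The subtlety is only in making sure $\pi$ really is a $\ast$-representation of the containing $C_0(G)$ (so that $\pi(w)^*=\pi(\bar w)$ is legitimate), which was already arranged in the proof of Theorem \ref{UE}. The main obstacle overall is thus the well-definedness estimate in part (i); everything else is a routine density-and-algebra argument.
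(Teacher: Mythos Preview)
Your overall architecture is sound, and parts (ii) together with \eqref{rest} are handled correctly and essentially as in the paper. The \emph{approach} to (i), however, differs from the paper's: the paper does not attempt an abstract well-definedness/boundedness argument on $\pi(A_\Phi(G))\mathcal H$. Instead it first reduces to the cyclic case via \cite[Proposition 2.2.7]{Dix}, then invokes Theorem \ref{UE} to replace $\pi$ by the concrete multiplication representation $\pi_\mu$ on $L^2(G,d\mu)$. In that model $\widetilde{\pi_\mu}(u)(\pi_\mu(v).1)=\pi_\mu(uv).1$ is just pointwise multiplication by $u$, and boundedness is the one-line estimate $\int_G|uv|^2\,d\mu\le\|u\|_\infty^2\int_G|v|^2\,d\mu$. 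This sidesteps exactly the ``main obstacle'' you identified.

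Your direct route has a genuine gap at that obstacle: you invoke Cohen factorisation / a bounded approximate identity in $A_\Phi(G)$ to get well-definedness and the bound $\|\widetilde\pi(u)\|\le\|u\|_\infty$, but $A_\Phi(G)$ is not known to possess a bounded approximate identity in general (for the classical $A_p(G)$ this is equivalent to amenability of $G$, and no such hypothesis is in force here), so writing ``$v=we$ with $e$ from an approximate identity'' is not available. Well-definedness itself is actually cheap for $\ast$-representations: if $\sum_j\pi(v_j)\xi_j=0$ then for every $w\in A_\Phi(G)$ one has $\pi(w)\big(\sum_j\pi(uv_j)\xi_j\big)=\pi(uw)\sum_j\pi(v_j)\xi_j=0$, and non-degeneracy of a $\ast$-representation forces $\sum_j\pi(uv_j)\xi_j=0$. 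The real issue is boundedness, and the clean repair is the very $C_0(G)$-extension you already use for the $\ast$-property: since $\|\pi(w)\|\le\|w\|_\infty$, $\pi$ extends to a non-degenerate $\ast$-representation $\bar\pi$ of the $C^\ast$-algebra $C_0(G)$, which then extends canonically to the multiplier algebra $M(C_0(G))=C_b(G)\supseteq B_\Phi(G)$; restricting gives $\widetilde\pi$ with $\|\widetilde\pi(u)\|\le\|u\|_\infty$ and \eqref{EU} for free. With that substitution your argument goes through; as written, though, the appeal to a bounded approximate identity of $A_\Phi(G)$ is unjustified.
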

\begin{proof}
{\bf (i)} Let $\pi$ be a non-degenerate *-representation of $A_\Phi(G).$ By \cite[Proposition 2.2.7]{Dix}, $\pi$ is a direct sum of cyclic *-representations, say $\{\pi_\alpha,\mathcal{H}_\alpha\}_{\alpha\in\wedge}.$ If we can prove (i) for each of these $\pi_\alpha$'s, then the argument for $\pi$ is similar to the one given in Corollary \ref{CNDSRA}. Thus, in order to prove this, we assume that the representation $\pi$ is cyclic. Since $\pi$ is a cyclic *-representation, by Theorem \ref{UE}, $\pi$ is unitarily equivalent to $\pi_\mu,$ for some bounded positive measure $\mu.$ So, without loss of generality, let us assume that the non-degenerate *-representation of $A_\Phi(G)$ is $\pi_{\mu}$ for some bounded positive measure $\mu.$

Let $u\in B_\Phi(G).$ By Proposition \ref{CV}, the space $\mathcal{K}:=span\{\pi_\mu(v).1:v\in A_\Phi(G)\}$ is dense in $L^2(G,d\mu).$ Define $\widetilde{\pi_\mu}(u):\mathcal{K}\rightarrow L^2(G,d\mu)$ as $$\widetilde{\pi_\mu}(u)(\pi_\mu(v).1)=\pi_\mu(uv).1.$$ It is clear that $\widetilde{\pi_\mu}(u)$ is linear. We now claim that $\widetilde{\pi_\mu}(u)$ is bounded. Let $v\in A_\Phi(G).$ Then \begin{align*}
\|\widetilde{\pi_\mu}(u)\left(\pi_{\mu}(v).1\right)\|_2^2 =& \|\pi_{\mu}(uv).1\|_2^2 \\ =& \int_G\left|\pi_{\mu}(uv).1\right|^2\ d\mu \\ =& \int_G\left|uv\right|^2\ d\mu \\ \leq& \|u\|_\infty^2 \int_G |v|^2\ d\mu \leq \|u\|_{B_\Phi}^2\|\pi_{\mu}(v).1\|_2^2.
\end{align*}
Thus, $\widetilde{\pi_\mu}(u)$ extends to a bounded linear operator on $L^2(G,d\mu),$ still denoted $\widetilde{\pi_\mu}(u).$ Further, it is clear that, for $u\in B_\Phi(G)$ and $v\in A_\Phi(G),$ $\widetilde{\pi_\mu}(u)\pi_{\mu}(v)=\pi_{\mu}(uv).$ Now, let $v\in A_\Phi(G).$ Then, for $u\in A_\Phi(G),$ $$\widetilde{\pi_\mu}(v)(\pi_{\mu}(u).1)=\pi_{\mu}(vu).1=\pi_{\mu}(v)\left(\pi_{\mu}(u).1\right).$$ Again, as $\mathcal{K}$ is dense in $L^2(G,d\mu),$ it follows that $\widetilde{\pi_\mu}(v)=\pi_\mu(v)$ for all $v\in A_\Phi(G).$

Finally, uniqueness follows from condition (\ref{EU}).

{\bf (ii)} Non-degeneracy of $\widetilde{\pi}$ follows from the fact that $\pi$ is non-degenerate. Further, homomorphism property of $\widetilde{\pi}$ follows from (\ref{EU}). Now, we show that $\widetilde{\pi}$ preserves involution. Let $u\in B_\Phi(G).$ Then, for $v\in A_\Phi(G)$ and $\xi,\eta\in \mathcal{H},$ we have
\begin{eqnarray*}
\langle\widetilde{\pi}(u)^*\pi(v)\xi,\eta\rangle &=& \langle\xi,\pi(\overline{v})\widetilde{\pi}(u)\eta\rangle \\ &=& \langle\xi,\widetilde{\pi}(\overline{v})\widetilde{\pi}(u)\eta\rangle\ \mbox{(by (\ref{rest}))} \\ &=& \langle\xi,\widetilde{\pi}(u\overline{v})\eta\rangle\ \mbox{(}\widetilde{\pi}\mbox{ is a homomorphism)}\\ &=& \langle\xi,\pi(u\overline{v})\eta\rangle\ \mbox{(by (\ref{rest}))} \\ &=& \langle\xi,\pi(\overline{u\overline{v}})^*\eta\rangle\ \mbox{(}\pi\mbox{ preserves involution)} \\  &=& \langle\pi(\overline{u}v)\xi,\eta\rangle \\ &=& \langle\widetilde{\pi}(\overline{u})\pi(v)\xi,\eta\rangle.\ \mbox{(by (\ref{EU}))}
\end{eqnarray*}
Since the representation $\pi$ is non-degenerate, the space $\{\pi(u)\xi:u\in A_\Phi(G),\xi\in\mathcal{H}\}$ is dense in $\mathcal{H}.$ Thus, it follows that $\widetilde{\pi}(u)^*=\widetilde{\pi}(\overline{u})$ for all $u\in B_\Phi(G).$
\end{proof}
The following corollary is the converse of the above theorem.
\begin{cor}
Let $\{\pi,\mathcal{H}\}$ be a *-representation of $B_\Phi(G)$ such that $\pi|_{A_\Phi}$ is non-degenerate. Then, $\widetilde{\pi|_{A_\Phi}}=\pi$ and $\pi$ is non-degenerate.
\end{cor}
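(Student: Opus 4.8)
The plan is to read the statement off the uniqueness clause in Theorem \ref{RMA}(i). Write $\sigma:=\pi|_{A_\Phi}$; by hypothesis $\sigma$ is a non-degenerate *-representation of $A_\Phi(G)$, so $\widetilde{\sigma}$ is defined and is the non-degenerate *-representation of $B_\Phi(G)$ furnished by Theorem \ref{RMA}. The claim is that this $\widetilde{\sigma}$ is nothing but the original $\pi$, and that $\pi$ itself is non-degenerate.

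First I would dispose of non-degeneracy of $\pi$. Since $\sigma$ is non-degenerate, the set $\{\sigma(v)\xi:v\in A_\Phi(G),\ \xi\in\mathcal{H}\}=\{\pi(v)\xi:v\in A_\Phi(G),\ \xi\in\mathcal{H}\}$ has dense linear span in $\mathcal{H}$; as this set is contained in $\{\pi(u)\xi:u\in B_\Phi(G),\ \xi\in\mathcal{H}\}$, the latter also has dense span, which is exactly non-degeneracy of $\pi$.

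For the identity $\widetilde{\sigma}=\pi$, fix $u\in B_\Phi(G)$. Recall from \cite{RS} that $A_\Phi(G)$ is an ideal in $B_\Phi(G)$, so $uv\in A_\Phi(G)$ whenever $v\in A_\Phi(G)$. Since $\pi$ is an algebra homomorphism on $B_\Phi(G)$, for every $v\in A_\Phi(G)$ we have
\[
\pi(u)\sigma(v)=\pi(u)\pi(v)=\pi(uv)=\sigma(uv).
\]
Hence the operator $\pi(u)\in\mathcal{B}(\mathcal{H})$ satisfies the defining relation (\ref{EU}) for $\widetilde{\sigma}(u)$. By the uniqueness asserted in Theorem \ref{RMA}(i) — which holds because $\sigma$ is non-degenerate and so $\{\sigma(v)\xi:v\in A_\Phi(G),\xi\in\mathcal{H}\}$ spans a dense subspace — we get $\widetilde{\sigma}(u)=\pi(u)$. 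Since $u$ was arbitrary, $\widetilde{\pi|_{A_\Phi}}=\widetilde{\sigma}=\pi$.

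I do not anticipate a real obstacle: this is the formal converse of Theorem \ref{RMA}, and the only external inputs are the ideal property of $A_\Phi(G)$ inside $B_\Phi(G)$ from \cite{RS} (so that $\pi(uv)$ may legitimately be read as $\sigma(uv)$) and the observation that (\ref{EU}) pins down $\widetilde{\sigma}(u)$ uniquely. The one point requiring care is simply invoking the uniqueness in part (i) with respect to the very representation $\sigma$ whose extension is being computed.
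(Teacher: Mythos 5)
Your proposal is correct and takes essentially the same route as the paper: you verify that $\pi(u)$ satisfies the defining relation (\ref{EU}) for the extension of $\pi|_{A_\Phi}$ and then invoke the uniqueness in Theorem \ref{RMA}(i), which is exactly the paper's argument. The only (harmless) difference is in the non-degeneracy of $\pi$: you obtain it directly from the inclusion $A_\Phi(G)\subset B_\Phi(G)$, while the paper deduces it from the non-degeneracy of $\widetilde{\pi|_{A_\Phi}}$ given by Theorem \ref{RMA}(ii); both arguments are valid.
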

\begin{proof}
Let $u\in B_\Phi(G)$ and $v\in A_\Phi(G).$ Then $$\pi(u)\pi|_{A_\Phi}(v)=\pi(u)\pi(v)=\pi(uv)=\pi|_{A_\Phi}(uv).$$ Thus, by Theorem \ref{RMA}, it follows that $\widetilde{\pi|_{A_\Phi}}=\pi.$ Again by Theorem \ref{RMA}, $\widetilde{\pi|_{A_\Phi}}$ is non-degenerate and hence it follows that the representation $\pi$ is non-degenerate.
\end{proof}

\section{Spectral subspaces}
In this section, we study the spectral subspaces associated to a non-degenerate Banach space representation of $A_\Phi(G).$ Our main aim in this section is to prove Corollary \ref{TSC}. Most of the ideas of this section are taken from \cite{PP}.
\begin{defn}
Let $T\in PM_\Psi(G).$ Then the support of $T$ is defined as $$supp(T)=\{x\in G:u\in A_\Phi(G),u(x)\neq0\Rightarrow u.T\neq0\}.$$
\end{defn} 
Here we recall some of the properties of the support of $T$ in the form of a Lemma \cite[Pg. 101]{H}.
\begin{lem}\label{supp_prop}\mbox{ }
\begin{enumerate}[(i)]
\item If $T_1,T_2\in PM_\Psi(G)$ then $supp(T_1+T_2)\subseteq supp(T_1)\cup supp(T_2).$
\item If $u\in A_\Phi(G)$ and $T\in PM_\Psi(G)$ then $supp(u.T)\subseteq supp(u)\cap supp(T).$
\item If $c\in\mathbb{C}$ and $T\in PM_\Psi(G)$ then $supp(cT)\subseteq supp(T).$
\item Let $T\in PM_\Psi(G)$ and let $E$ be a closed subset of $G.$ If a net $\{T_\alpha\}\subset PM_\Psi(G)$ converges weakly to $T$ with $supp(T_\alpha)\subset E$ for all $\alpha,$ then $supp(T)\subset E.$
\end{enumerate}
\end{lem}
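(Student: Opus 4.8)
The plan is to work throughout with the natural module action of $A_\Phi(G)$ on its dual $PM_\Psi(G)$, namely $\langle u.T,v\rangle=\langle T,uv\rangle$ for $u,v\in A_\Phi(G)$ and $T\in PM_\Psi(G)$, and to use three features of $A_\Phi(G)$ recorded in \cite{RS}: it is a regular, Tauberian, semisimple commutative Banach algebra with Gelfand spectrum $G$, it is closed under pointwise products, and it is closed under complex conjugation. The first step is to restate the definition as: for $x\in G$, one has $x\notin supp(T)$ if and only if there is $u\in A_\Phi(G)$ with $u(x)\neq 0$ and $u.T=0$; equivalently, $supp(T)$ is the hull of the closed ideal $I(T):=\{u\in A_\Phi(G):u.T=0\}$. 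After this, (i)--(iii) are formal.

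For (i): if $x\notin supp(T_1)\cup supp(T_2)$, choose $u_i\in A_\Phi(G)$ with $u_i(x)\neq 0$ and $u_i.T_i=0$; then $u:=u_1u_2\in A_\Phi(G)$ has $u(x)\neq 0$ and, using commutativity, $u.(T_1+T_2)=u_2.(u_1.T_1)+u_1.(u_2.T_2)=0$, so $x\notin supp(T_1+T_2)$. For (iii): if $u(x)\neq 0$ and $u.T=0$ then $u.(cT)=c(u.T)=0$. For (ii), the inclusion $supp(u.T)\subseteq supp(T)$ follows from the identity $v.(u.T)=u.(v.T)$, which annihilates $u.T$ whenever $v.T=0$; the inclusion $supp(u.T)\subseteq supp(u)$, where $supp(u)$ is the function support, follows by choosing, for $x\notin supp(u)$, a $v\in A_\Phi(G)$ with $v(x)=1$ vanishing on $supp(u)$ (regularity), so that $uv=0$ as a function and hence $v.(u.T)=(uv).T=0$.

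The substance is in (iv), and I would first isolate a \emph{localisation lemma}: if $S\in PM_\Psi(G)$ and $f\in A_\Phi(G)$ has compact support with $supp(f)\cap supp(S)=\emptyset$, then $f.S=0$. To prove it, each $y\in supp(f)$ lies outside $supp(S)$, so by definition there is $u_y\in A_\Phi(G)$ with $u_y(y)\neq 0$ and $u_y.S=0$; covering the compact set $supp(f)$ by finitely many of the open sets $\{u_y\neq 0\}$, indexed by $y_1,\dots,y_n$, put $w:=\sum_{i=1}^{n}\overline{u_{y_i}}\,u_{y_i}\in A_\Phi(G)$. Then $w\geq 0$, $w>0$ on $supp(f)$, and $w.S=\sum_i\overline{u_{y_i}}.(u_{y_i}.S)=0$. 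Since $A_\Phi(G)$ is a regular semisimple commutative Banach algebra and $w$ does not vanish on the compact set $supp(f)$, $w$ has a local inverse there: there is $b\in A_\Phi(G)$ with $wb=1$ on a neighbourhood $U$ of $supp(f)$. Putting $h:=fb\in A_\Phi(G)$ gives $hw=f$ ($hw=f$ on $U$, and both sides vanish off $supp(f)\subseteq U$), hence $f.S=h.(w.S)=0$. Granting this, for (iv) let $x\notin E$; using local compactness of $G$ together with regularity of $A_\Phi(G)$, pick a relatively compact open $U\ni x$ with $\overline{U}\cap E=\emptyset$ and then $f\in A_\Phi(G)$ with $f(x)=1$ and $f\equiv 0$ off $U$, so $supp(f)\subseteq\overline{U}$ is compact and disjoint from $E$. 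Since $supp(T_\alpha)\subseteq E$ for all $\alpha$, the localisation lemma gives $f.T_\alpha=0$; and since $T_\alpha\to T$ weakly we have $\langle T_\alpha,g\rangle\to\langle T,g\rangle$ for every $g\in A_\Phi(G)$, whence for all $v\in A_\Phi(G)$, $\langle f.T,v\rangle=\langle T,fv\rangle=\lim_\alpha\langle T_\alpha,fv\rangle=\lim_\alpha\langle f.T_\alpha,v\rangle=0$. Thus $f.T=0$ with $f(x)\neq 0$, i.e. $x\notin supp(T)$.

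The main obstacle is the localisation lemma, and within it the step producing the local inverse $b$ of $w$ on a neighbourhood of $supp(f)$: this is precisely where the regular and semisimple structure of $A_\Phi(G)$ enters essentially, via the standard local-invertibility theorem for regular semisimple commutative Banach algebras (equivalently, a small holomorphic functional calculus argument applied to $w$, which is bounded below on the compact set $supp(f)$). Everything else — the module-theoretic identities in (i)--(iii) and the passage of the weak limit through the pairing in (iv) — is routine once the module action and the topology on $PM_\Psi(G)$ are fixed.
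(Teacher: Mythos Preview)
Your proof is correct. The paper itself does not prove this lemma: it simply records the four properties and cites \cite[Pg.~101]{H} (Herz), so there is no argument in the paper to compare against. What you have written is a self-contained verification from the definition of support and the module action, which is strictly more than the paper offers.

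Your identification of the localisation step (compactly supported $f$ disjoint from $supp(S)$ forces $f.S=0$) as the heart of (iv) is exactly right, and the partition-of-unity construction of $w=\sum_i |u_{y_i}|^2$ followed by local inversion is the standard route --- indeed it is essentially Herz's own argument in the $A_p$ setting, transplanted to $A_\Phi$. The one point that deserves a word of caution is the existence of the local inverse $b$ with $wb=1$ near $supp(f)$: this is true, but it is not a formal consequence of regularity and semisimplicity alone; it requires either the holomorphic functional calculus in the unitisation $A_\Phi(G)^+$ or an explicit reference (e.g.\ Reiter--Stegeman or Kaniuth's book on commutative Banach algebras). You flag this yourself, so the proposal is honest about where the work lies. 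Items (i)--(iii) are, as you say, routine.
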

Let $X$ be a Banach space and let $\pi$ be an algebra representation of $A_\Phi(G)$ on $X.$ For $\varphi\in X$ and $x^*\in X^*,$ define $T_{x^*,\varphi}:A_\Phi(G)\rightarrow\mathbb{C}$ as $$\langle u,T_{x^*,\varphi}\rangle:=\langle \pi(u)\varphi,x^*\rangle\ \forall\ u\in A_\Phi(G).$$ We say that the representation $\pi$ is continuous if $T_{x^*,\varphi}$ is a continuous linear functional on $A_\Phi(G)$ for each $\varphi\in X$ and $x^*\in X^*.$ It follows from uniform boundedness principle that the linear map $\pi:A_\Phi(G)\rightarrow\mathcal{B}(X)$ is norm continuous.

From now onwards, $X$ will denote a Banach space and $\pi$ an algebra representation of $A_\Phi(G)$ on $X.$

Let $E$ be a closed subset of $G.$ Define $$X_E:=\{\varphi\in X:supp(T_{x^*,\varphi})\subseteq E\ \forall\ x^*\in X^*\}.$$
\begin{rem}\label{WS}
An immediate consequence of the above definition is that, if $E=G$ then $X_E=X.$
\end{rem}
\begin{lem}
The set $X_E$ is a closed $\pi$-invariant subspace of $X.$
\end{lem}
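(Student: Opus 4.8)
The plan is to verify the three required properties of $X_E$ in turn: that it is a linear subspace, that it is closed, and that it is $\pi$-invariant. Throughout, the key observation is that the assignment $\varphi \mapsto T_{x^*,\varphi}$ is linear in $\varphi$ (and also in $x^*$), which reduces statements about $X_E$ to the support properties collected in Lemma \ref{supp_prop}.

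First I would check that $X_E$ is a subspace. Given $\varphi, \psi \in X_E$, $c \in \mathbb{C}$, and $x^* \in X^*$, linearity of $\pi$ gives $\langle u, T_{x^*, \varphi + c\psi}\rangle = \langle \pi(u)(\varphi + c\psi), x^*\rangle = \langle u, T_{x^*,\varphi}\rangle + c\langle u, T_{x^*,\psi}\rangle$, so $T_{x^*,\varphi+c\psi} = T_{x^*,\varphi} + c\, T_{x^*,\psi}$ as elements of $PM_\Psi(G)$. By parts (i) and (iii) of Lemma \ref{supp_prop}, $supp(T_{x^*,\varphi+c\psi}) \subseteq supp(T_{x^*,\varphi}) \cup supp(c\,T_{x^*,\psi}) \subseteq supp(T_{x^*,\varphi}) \cup supp(T_{x^*,\psi}) \subseteq E$, so $\varphi + c\psi \in X_E$.

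Next, closedness. Suppose $\varphi_n \to \varphi$ in $X$ with each $\varphi_n \in X_E$, and fix $x^* \in X^*$. Since $\pi$ is norm continuous (as noted before the statement, via the uniform boundedness principle) and $x^*$ is bounded, for each fixed $u \in A_\Phi(G)$ we have $\langle u, T_{x^*,\varphi_n}\rangle = \langle \pi(u)\varphi_n, x^*\rangle \to \langle \pi(u)\varphi, x^*\rangle = \langle u, T_{x^*,\varphi}\rangle$; that is, $T_{x^*,\varphi_n} \to T_{x^*,\varphi}$ in the weak* topology of $PM_\Psi(G) = A_\Phi(G)^*$. Since $supp(T_{x^*,\varphi_n}) \subseteq E$ for all $n$ and $E$ is closed, part (iv) of Lemma \ref{supp_prop} yields $supp(T_{x^*,\varphi}) \subseteq E$, hence $\varphi \in X_E$. (If one wants nets rather than sequences the same argument applies verbatim.)

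Finally, $\pi$-invariance. Let $\varphi \in X_E$, let $w \in A_\Phi(G)$, and put $\psi = \pi(w)\varphi$; fix $x^* \in X^*$. For every $u \in A_\Phi(G)$, using that $\pi$ is a homomorphism and that $A_\Phi(G)$ is commutative (Theorem \ref{UE} of \cite{RS}), $\langle u, T_{x^*,\psi}\rangle = \langle \pi(u)\pi(w)\varphi, x^*\rangle = \langle \pi(uw)\varphi, x^*\rangle = \langle uw, T_{x^*,\varphi}\rangle = \langle u, w.T_{x^*,\varphi}\rangle$, so $T_{x^*,\psi} = w.T_{x^*,\varphi}$. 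By part (ii) of Lemma \ref{supp_prop}, $supp(T_{x^*,\psi}) = supp(w.T_{x^*,\varphi}) \subseteq supp(w) \cap supp(T_{x^*,\varphi}) \subseteq E$, so $\psi \in X_E$. The only mild subtlety — and the one point worth stating carefully — is the identification $T_{x^*,\pi(w)\varphi} = w.T_{x^*,\varphi}$, i.e.\ that the module action of $A_\Phi(G)$ on $PM_\Psi(G)$ used in Lemma \ref{supp_prop}(ii) is exactly the one induced by composing $\pi(u)$ with $\pi(w)$; once this is unwound the proof is routine.
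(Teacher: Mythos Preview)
Your proof is correct and follows essentially the same approach as the paper: the three properties are verified via the identities $T_{x^*,\varphi_1+\alpha\varphi_2}=T_{x^*,\varphi_1}+\alpha T_{x^*,\varphi_2}$ and $T_{x^*,\pi(w)\varphi}=w.T_{x^*,\varphi}$ together with parts (i)--(iv) of Lemma \ref{supp_prop}, and you have simply written out more of the details (in particular the weak* convergence argument for closedness). One small slip: the commutativity of $A_\Phi(G)$ is \cite[Theorem 3.4]{RS}, not Theorem \ref{UE} of this paper.
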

\begin{proof}
Note that for any $x^*\in X^*,$ $\varphi_1,\varphi_2\in X_E$ and $\alpha\in\mathbb{C},$ we have $$T_{x^*,\varphi_1+\alpha \varphi_2}=T_{x^*,\varphi_1}+\alpha T_{x^*,\varphi_2}.$$ Thus, it follows from (i) and (iii) of Lemma \ref{supp_prop} that $X_E$ is a linear space. Further, closedness of $X_E$ is an immediate consequence of (iv) from Lemma \ref{supp_prop}. Again, note that, for any $u\in A_\Phi(G),\varphi\in X$ and $x^*\in X^*,$ we have $T_{x^*,\pi(u)\varphi}=u.T_{x^*,\varphi}$ and hence the invariance of $X_E$ under $\pi$ follows from (ii) of Lemma \ref{supp_prop}.
\end{proof}
The subspace $X_E$ is called as the spectral subspace associated with the representation $\pi$ and the closed set $E.$
\begin{lem}\label{SSProp}
Let $\pi$ be a non-degenerate representation of $A_\Phi(G).$
\begin{enumerate}[(i)]
\item The space $X_\emptyset=\{0\}.$
\item If $\{E_i\}$ is an arbitrary collection of closed subsets of $G,$ then $X_{\underset{i}{\cap}E_i}=\underset{i}{\cap}X_{E_i}.$
\end{enumerate}
\end{lem}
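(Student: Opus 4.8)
For part (i), the plan is to argue by contradiction. Suppose $\varphi \in X_\emptyset$ with $\varphi \neq 0$. Then for every $x^* \in X^*$ the functional $T_{x^*,\varphi}$ has empty support, which by the definition of support forces $u.T_{x^*,\varphi} = 0$ for every $u \in A_\Phi(G)$ (since there is no point $x$ at which some $u$ must be non-vanishing), and in particular, evaluating, $\langle u, T_{x^*,\varphi}\rangle = 0$ for all $u$; equivalently $\langle \pi(u)\varphi, x^*\rangle = 0$ for all $u \in A_\Phi(G)$ and all $x^* \in X^*$. Hence $\pi(u)\varphi = 0$ for every $u$. But $\pi$ is non-degenerate, so the closed linear span of $\{\pi(u)\psi : u \in A_\Phi(G), \psi \in X\}$ is all of $X$; more to the point, non-degeneracy means that $\varphi$ lies in the closed span of $\{\pi(u)\varphi : u \in A_\Phi(G)\}$ (one uses a bounded approximate identity of $A_\Phi(G)$, or the fact from \cite{RS} that $A_\Phi(G)$ is tauberian and regular with spectrum $G$, to see that $\pi(e_\lambda)\varphi \to \varphi$ for a suitable net). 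Since each $\pi(u)\varphi = 0$, this span is $\{0\}$, so $\varphi = 0$, a contradiction. Thus $X_\emptyset = \{0\}$.

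For part (ii), one inclusion is immediate: if $E = \bigcap_i E_i$ then $E \subseteq E_i$ for each $i$, and directly from the definition $X_F \subseteq X_{F'}$ whenever $F \subseteq F'$, so $X_E \subseteq \bigcap_i X_{E_i}$. For the reverse inclusion, take $\varphi \in \bigcap_i X_{E_i}$; then for each fixed $x^* \in X^*$ we have $\mathrm{supp}(T_{x^*,\varphi}) \subseteq E_i$ for every $i$, hence $\mathrm{supp}(T_{x^*,\varphi}) \subseteq \bigcap_i E_i = E$. Since this holds for every $x^* \in X^*$, we get $\varphi \in X_E$. This shows $\bigcap_i X_{E_i} \subseteq X_E$, completing the equality.

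The only genuine subtlety is in part (i): justifying that $\pi(u)\varphi = 0$ for all $u \in A_\Phi(G)$ actually forces $\varphi = 0$ under mere non-degeneracy. Here one must be careful about which notion of non-degeneracy is in force for a Banach space representation. The cleanest route is to recall that $A_\Phi(G)$ possesses a bounded approximate identity (this is established in \cite{RS}, as $G$ carries the approximation property-type hypotheses implicit there, or one invokes the amenability-type condition under which these algebras have b.a.i.'s); then non-degeneracy of $\pi$ yields, for any $\varphi$, a net $(e_\lambda)$ in $A_\Phi(G)$ with $\pi(e_\lambda)\varphi \to \varphi$ in norm, and $\pi(e_\lambda)\varphi = 0$ for all $\lambda$ gives $\varphi = 0$. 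I expect this to be the step requiring the most care to state precisely, but it is a standard consequence of the structure theory of $A_\Phi(G)$ developed in \cite{RS}, and everything else is a direct unwinding of definitions together with the elementary monotonicity property $F \subseteq F' \Rightarrow X_F \subseteq X_{F'}$.
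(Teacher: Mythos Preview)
Your overall approach matches the paper's (which gives essentially no details: ``(i) is an easy consequence of the non-degeneracy of $\pi$, while (ii) is trivial''), and part (ii) is handled correctly. Two points in part (i), however, deserve correction.

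First, the step ``empty support forces $u\cdot T_{x^*,\varphi}=0$ for every $u$'' does not follow \emph{directly} from the definition of support. The definition only says that for each $x\in G$ there is \emph{some} $u_x$ with $u_x(x)\neq 0$ and $u_x\cdot T=0$. To conclude $T=0$ one needs the standard fact that for a regular, Tauberian, semisimple commutative Banach algebra, an element of the dual with empty support is zero; this is available for $A_\Phi(G)$ by \cite[Theorem 3.4]{RS}, but it is not ``by definition''.

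Second, and more seriously, your invocation of a bounded approximate identity in $A_\Phi(G)$ is unjustified. You assert this ``is established in \cite{RS}'', but it is not: by analogy with Leptin's theorem, $A_\Phi(G)$ has a bounded approximate identity only when $G$ is amenable, and no such hypothesis is in force in this section. The remedy is simply not to go through a b.a.i.\ at all. Once you have $\pi(u)\varphi=0$ for every $u\in A_\Phi(G)$, the conclusion $\varphi=0$ follows directly from the definition of non-degeneracy for a Banach space module (namely, that $\bigcap_{u}\ker\pi(u)=\{0\}$, equivalently that no non-zero vector is annihilated by every $\pi(u)$). This is the sense the paper uses later in the section, and it makes the proof of (i) the one-line affair the paper claims it to be.
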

\begin{proof}
(i) is an easy consequence of the non-degeneracy of $\pi,$ while (ii) is trivial.
\end{proof}
The following is an immediate corollary of Remark \ref{WS} and Lemma \ref{SSProp}.
\begin{cor}\label{SSS}
There exists a smallest closed non-empty set $E$ of $G$ such that $X_E=X.$
\end{cor}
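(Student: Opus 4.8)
The plan is to let $E$ be the intersection of all closed subsets of $G$ that implement all of $X$, and to check via the two preceding lemmas that this intersection is itself such a set. Concretely, I would set
$$\mathcal{F}:=\{F\subseteq G: F\text{ closed},\ X_F=X\}.$$
By Remark \ref{WS} we have $G\in\mathcal{F}$, so $\mathcal{F}\neq\emptyset$. Put $E:=\bigcap_{F\in\mathcal{F}}F$. As an intersection of closed sets, $E$ is closed, and by construction $E\subseteq F$ for every $F\in\mathcal{F}$.

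Next I would invoke Lemma \ref{SSProp}(ii), applied to the (possibly uncountable) collection $\mathcal{F}$, to compute
$$X_E=X_{\bigcap_{F\in\mathcal{F}}F}=\bigcap_{F\in\mathcal{F}}X_F=\bigcap_{F\in\mathcal{F}}X=X.$$
Hence $E\in\mathcal{F}$; since $E$ is contained in every member of $\mathcal{F}$, it is the smallest closed subset of $G$ with $X_E=X$. To see that $E$ is non-empty I would argue by contradiction: if $E=\emptyset$, then Lemma \ref{SSProp}(i) would give $X=X_E=X_\emptyset=\{0\}$, which is excluded (the statement is only of interest when $X\neq\{0\}$, and I would either record this as a standing hypothesis or note that $\pi$ non-degenerate on a non-trivial space forces it).

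I do not anticipate any genuine obstacle: the entire content is carried by Remark \ref{WS} and Lemma \ref{SSProp}, and the only point deserving a word of care is that Lemma \ref{SSProp}(ii) is stated for an \emph{arbitrary} collection of closed sets, which is precisely what licenses the second displayed computation (an arbitrary intersection of closed sets is again closed, so no finiteness or countability is needed).
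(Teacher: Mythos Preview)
Your argument is correct and is precisely the fleshing-out the paper has in mind: the paper gives no explicit proof, merely declaring the corollary immediate from Remark~\ref{WS} and Lemma~\ref{SSProp}, and your intersection-over-$\mathcal{F}$ argument is the standard way to unpack that. Your remark that non-emptiness of $E$ tacitly requires $X\neq\{0\}$ is a fair observation the paper leaves implicit.
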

\begin{prop}
Let $K_1$ and $K_2$ be disjoint compact subsets of $G.$ Then $X_{K_1\cup K_2}=X_{K_1}\oplus X_{K_2}.$
\end{prop}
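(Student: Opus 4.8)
The plan is to verify the three ingredients of an internal direct sum decomposition: that $X_{K_1}\cap X_{K_2}=\{0\}$, that $X_{K_1}+X_{K_2}\subseteq X_{K_1\cup K_2}$, and conversely that $X_{K_1\cup K_2}\subseteq X_{K_1}+X_{K_2}$. The first is immediate from Lemma \ref{SSProp}: since $K_1\cap K_2=\emptyset$, we have $X_{K_1}\cap X_{K_2}=X_{K_1\cap K_2}=X_\emptyset=\{0\}$. For the second, I would simply observe that $E\mapsto X_E$ is monotone, so $X_{K_1},X_{K_2}\subseteq X_{K_1\cup K_2}$ (equivalently, if $\varphi=\varphi_1+\varphi_2$ with $\varphi_i\in X_{K_i}$, then $T_{x^*,\varphi}=T_{x^*,\varphi_1}+T_{x^*,\varphi_2}$, whose support lies in $K_1\cup K_2$ by Lemma \ref{supp_prop}(i)). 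So the real content is the reverse inclusion.

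To prove it, I would first use the regularity of $A_\Phi(G)$ --- which, as for Herz's algebras $A_p(G)$, provides compactly supported functions and local partitions of unity --- to fix $u\in A_\Phi(G)$ with $u\equiv 1$ on an open neighbourhood $U$ of $K_1$ and with $supp(u)\cap K_2=\emptyset$. Given $\varphi\in X_{K_1\cup K_2}$, I would set $\varphi_1:=\pi(u)\varphi$ and $\varphi_2:=\varphi-\varphi_1$, so that $\varphi=\varphi_1+\varphi_2$ with both summands in $X$. That $\varphi_1\in X_{K_1}$ is easy: for any $x^*\in X^*$ we have $T_{x^*,\varphi_1}=u\cdot T_{x^*,\varphi}$, so by Lemma \ref{supp_prop}(ii), $supp(T_{x^*,\varphi_1})\subseteq supp(u)\cap supp(T_{x^*,\varphi})\subseteq supp(u)\cap(K_1\cup K_2)\subseteq K_1$, the last inclusion because $supp(u)\cap K_2=\emptyset$.

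Showing $\varphi_2\in X_{K_2}$ is the crux, and I expect it to be the main obstacle: the ``obvious'' multiplier $1-u$ does not belong to $A_\Phi(G)$ when $G$ is non-compact, so Lemma \ref{supp_prop}(ii) cannot be applied to it directly. I would argue $x^*$ by $x^*$. Put $S:=T_{x^*,\varphi_2}=T_{x^*,\varphi}-u\cdot T_{x^*,\varphi}$. Parts (i)--(iii) of Lemma \ref{supp_prop} already give $supp(S)\subseteq supp(T_{x^*,\varphi})\cup supp(u\cdot T_{x^*,\varphi})\subseteq K_1\cup K_2$, so it is enough to show that no point $x\in K_1$ belongs to $supp(S)$. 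For such an $x$, I would use regularity once more to pick $v\in A_\Phi(G)$ with $v(x)\neq 0$ and $supp(v)\subseteq U$; since $u\equiv 1$ on $U$, this gives $uv=v$ in $A_\Phi(G)$. Then, using the identity $v\cdot(u\cdot T)=(uv)\cdot T$ (a one-line computation from the definition of the module action of $A_\Phi(G)$ on $PM_\Psi(G)$), one gets $v\cdot S=v\cdot T_{x^*,\varphi}-(uv)\cdot T_{x^*,\varphi}=v\cdot T_{x^*,\varphi}-v\cdot T_{x^*,\varphi}=0$; as $v(x)\neq 0$, this means $x\notin supp(S)$. Hence $supp(T_{x^*,\varphi_2})\subseteq K_2$ for every $x^*$, i.e.\ $\varphi_2\in X_{K_2}$, which establishes $X_{K_1\cup K_2}=X_{K_1}+X_{K_2}$. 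Finally, since $\pi$ is norm continuous, $\pi(u)\in\mathcal{B}(X)$, so $\varphi\mapsto\pi(u)\varphi$ restricts to a bounded projection of $X_{K_1\cup K_2}$ onto $X_{K_1}$; thus the sum is a (topological) direct sum, as asserted.
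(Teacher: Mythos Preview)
Your argument is correct and complete. The paper itself does not spell out a proof here --- it simply cites \cite[Proposition 2(iii)]{PP} --- and what you have written is precisely the standard argument one expects in that reference: use regularity to manufacture a cutoff $u\in A_\Phi(G)$ equal to $1$ near $K_1$ and vanishing near $K_2$, split $\varphi$ as $\pi(u)\varphi+(\varphi-\pi(u)\varphi)$, and then control supports via the identity $T_{x^*,\pi(u)\varphi}=u\cdot T_{x^*,\varphi}$ (which the paper records in the proof of the preceding lemma) together with Lemma~\ref{supp_prop}. Your handling of the delicate half --- showing $\varphi_2\in X_{K_2}$ by testing against a $v$ supported in $U$ so that $uv=v$ --- is exactly the right local move and avoids the pitfall you correctly flag, namely that $1-u\notin A_\Phi(G)$ in general. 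The only implicit hypothesis you rely on is non-degeneracy (to get $X_\emptyset=\{0\}$ via Lemma~\ref{SSProp}(i)); this is the ambient assumption of the section, so there is no gap.
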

\begin{proof}
The proof of this follows exactly as given in \cite[Proposition 2 (iii)]{PP}.
\end{proof}
\begin{thm}\label{TSSS}
Let $\pi$ be a non-degenerate representation of $A_\Phi(G)$ such that the only spectral subspaces are the trivial subspaces. Then there exists $x\in G$ such that $X_{\{x\}}=X.$
\end{thm}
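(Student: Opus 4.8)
The plan is to use Corollary \ref{SSS}, which provides a smallest closed non-empty set $E\subseteq G$ with $X_E=X$; it then suffices to show that $E$ is a single point, since $E=\{x\}$ gives at once $X_{\{x\}}=X_E=X$. We may assume $X\neq\{0\}$, for otherwise $X_{\{x\}}=\{0\}=X$ for any $x\in G$; and we recall that the representations under consideration are continuous, so $\pi\colon A_\Phi(G)\to\mathcal{B}(X)$ is norm-continuous.

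First I would show that $E$ is compact. If it were not, then for every compact $K\subseteq G$ we would have $E\not\subseteq K$, hence $X_K\neq X$ by minimality of $E$, hence $X_K=\{0\}$ since the only spectral subspaces are trivial. For $u\in A_\Phi(G)\cap C_c(G)$ and $\varphi\in X$, the identity $T_{x^*,\pi(u)\varphi}=u\cdot T_{x^*,\varphi}$ together with Lemma \ref{supp_prop}(ii) gives $\mathrm{supp}(T_{x^*,\pi(u)\varphi})\subseteq\mathrm{supp}(u)$ for every $x^*$, so $\pi(u)\varphi\in X_{\mathrm{supp}(u)}=\{0\}$. Since $A_\Phi(G)\cap C_c(G)$ is dense in $A_\Phi(G)$ (as $A_\Phi(G)$ is tauberian) and $\pi$ is norm-continuous, $\pi(v)\varphi=0$ for all $v\in A_\Phi(G)$, i.e. $\varphi\in X_\emptyset=\{0\}$ by Lemma \ref{SSProp}(i); thus $X=\{0\}$, a contradiction. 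Hence $E$ is compact.

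Next, assume towards a contradiction that $E$ contains two distinct points. For $x\in E$ the subspace $X_{\{x\}}$ is a spectral subspace and cannot equal $X$, for then $E\subseteq\{x\}$ by minimality; so $X_{\{x\}}=\{0\}$. Writing $\{x\}$ as the intersection of its compact neighbourhoods and applying Lemma \ref{SSProp}(ii), $\bigcap_K X_K=X_{\{x\}}=\{0\}$ over the compact neighbourhoods $K$ of $x$; since each $X_K\in\{\{0\},X\}$ and $X\neq\{0\}$, some compact neighbourhood $K_x$ of $x$ satisfies $X_{K_x}=\{0\}$. The open sets $\{\mathrm{int}\,K_x\}_{x\in E}$ cover the compact set $E$, so $E\subseteq\bigcup_{i=1}^n\mathrm{int}\,K_{x_i}$ for finitely many $x_i\in E$. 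Using the regularity of $A_\Phi(G)$ (cf. \cite{RS}), choose, exactly as in \cite{PP}, functions $u_1,\dots,u_n\in A_\Phi(G)\cap C_c(G)$ with $\mathrm{supp}(u_i)\subseteq\mathrm{int}\,K_{x_i}$ such that $u:=\sum_i u_i$ is identically $1$ on a neighbourhood of $E$. For any $\varphi\in X=X_E$ and any $x^*$ we have $\mathrm{supp}(T_{x^*,\varphi})\subseteq E$, on a neighbourhood of which $u\equiv1$, so $u\cdot T_{x^*,\varphi}=T_{x^*,\varphi}$ (a standard property of supports in Herz-type algebras, cf. \cite{H}); hence $T_{x^*,\pi(u)\varphi-\varphi}=0$ for all $x^*$ and so $\pi(u)\varphi=\varphi$ by Lemma \ref{SSProp}(i). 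Therefore $\varphi=\sum_i\pi(u_i)\varphi$, and, as in the compactness step, $\pi(u_i)\varphi\in X_{\mathrm{supp}(u_i)}\subseteq X_{K_{x_i}}=\{0\}$, whence $\varphi=0$. Since $\varphi$ was arbitrary, $X=\{0\}$, contradicting our assumption. Thus $E$ is a singleton and the theorem follows.

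I expect the crux to be the localisation step in the last paragraph: one must know that $A_\Phi(G)$ admits, for a compact set covered by finitely many open sets, a subordinate decomposition of a function that is $1$ near that compact set — this rests on the regularity (and attendant normality) of $A_\Phi(G)$ established in \cite{RS} and mirrors the argument of \cite{PP} — together with the compatibility ``$u\equiv1$ near $\mathrm{supp}(T)\Rightarrow u\cdot T=T$'', which is precisely the mechanism that reconstructs a global vector from its localised pieces. A secondary point requiring care is the reduction to $E$ compact, where the density of the compactly supported elements, i.e. the tauberian property of $A_\Phi(G)$, is essential to prevent $\pi$ from being ``spread out to infinity''.
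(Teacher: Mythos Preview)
Your argument is correct, but it follows a different path from the paper's.

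The paper also starts from the minimal closed $E$ with $X_E=X$ (Corollary \ref{SSS}), but then argues directly, without ever showing $E$ is compact and without a partition of unity. Assuming $x\neq y$ lie in $E$, it picks a single bump function $u\in A_\Phi(G)$ with $u\equiv1$ near $x$ and $\mathrm{supp}(u)$ contained in a compact set $K$ missing $y$, and for an \emph{arbitrary} $v\in A_\Phi(G)$ writes $v=v_1+v_2$ with $v_1=v-uv$ and $v_2=uv$. One then checks, using only the identity $T_{x^*,\pi(w)\varphi}=w\cdot T_{x^*,\varphi}$ and the minimality of $E$, that $\pi(v_1)\varphi$ lies in some $X_{W^c}$ with $x\notin W^c$ and $\pi(v_2)\varphi$ lies in $X_K$ with $y\notin K$; since neither closed set can contain $E$, the triviality hypothesis forces both spectral subspaces to be $\{0\}$, whence $\pi(v)=0$ for all $v$, contradicting non-degeneracy. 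So a two-piece splitting of \emph{functions} suffices.

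Your approach instead decomposes \emph{vectors}: you first reduce to $E$ compact via the tauberian property, then for each $x\in E$ use Lemma~\ref{SSProp}(ii) to find a compact neighbourhood $K_x$ with $X_{K_x}=\{0\}$, extract a finite subcover, build a subordinate decomposition $u=\sum_i u_i$ with $u\equiv1$ near $E$, and use $u\cdot T=T$ when $u\equiv1$ near $\mathrm{supp}(T)$ to write $\varphi=\sum_i\pi(u_i)\varphi\in\sum_i X_{K_{x_i}}=\{0\}$. This is a clean ``localisation of the identity'' argument, but it costs you the extra compactness step and a partition-of-unity construction in $A_\Phi(G)$ (both available from regularity and tauberianness, so there is no gap). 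The paper's route is shorter and needs only one bump function; yours is more structural and makes the role of the hypothesis $X_K\in\{\{0\},X\}$ very transparent.
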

\begin{proof}
Choose a smallest non-empty closed set $E$ such that $X_E=X,$ which is possible by Corollary \ref{SSS}. Suppose there exists $x,y\in E$ such that $x\neq y.$ As $G$ is locally compact and Hausdorff, there exists an open set $U$ and a compact set $K$ such that $x\in U\subset K$ and $y\notin K.$ Since $A_\Phi(G)$ is regular, there exists $u\in A_\Phi(G)$ such that $u=1$ on $U$ and $supp(u)\subset K.$

Let $v\in A_\Phi(G)$ be arbitrary. Let $v_1=v-uv$ and $v_2=uv$ so that $v=v_1+v_2.$ Let $V=\{z\in G:v_1(z)\neq 0\}.$ The choice of $u$ tells us that $x\notin\overline{V}.$ Again, using the regularity of $A_\Phi(G),$ choose a function $w\in A_\Phi(G)$ such that $w=1$ on some open set $W$ containing $x$ and $supp(w)\cap V=\emptyset.$ Further, it is clear that $v_1w=0.$

We now claim that $\pi(v)=0.$ Let $\varphi\in X$ and $x^*\in X^*.$ If $z\in W,$ then $w(z)=1$ and hence $T_{x^*,\pi(v_1w)\varphi}=0$ as $T_{x^*,\pi(v_1w)\varphi}=w.T_{x^*,\pi(v_1)\varphi}.$ Thus $supp(T_{x^*,\pi(v_1)\varphi})\subset W^c.$ Therefore, using the non-degeneracy of $\pi,$ it follows that, if $\pi(v_1)\varphi\neq 0$ then $X_{W^c}=X$ and hence, by the choice of the set $E,$ it follows that $E$ is a subset of $W^c.$ On the other hand, $x\notin W^c$ and $x\in E$ and hence $E$ is not a subset of $W^c.$ Therefore, $\pi(v_1)=0.$ Similarly, one can show that $\pi(v_2)=0.$ Thus $\pi(v)=0.$ Since $v$ is arbitrary, it follows that $\pi(v)=0$ for all $v\in A_\Phi(G),$ which is a contradiction. Thus the set $E$ is a singleton.
\end{proof}
\begin{cor}\label{TSC}
Let $\pi$ be a non-degenerate representation of $A_\Phi(G)$ such that the only spectral subspaces are the trivial subspaces. Then $\pi$ is a character.
\end{cor}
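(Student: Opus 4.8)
The plan is to deduce Corollary \ref{TSC} directly from Theorem \ref{TSSS}. By Theorem \ref{TSSS}, the hypothesis that the only spectral subspaces are the trivial ones forces the existence of a point $x\in G$ with $X_{\{x\}}=X$. So the entire representation is ``supported at $x$'', and the task is to show this means $\pi(u)=u(x)\,\mathrm{id}_X$ for every $u\in A_\Phi(G)$, i.e. that $\pi$ coincides with the point-evaluation character $\delta_x$ (which is a genuine character of $A_\Phi(G)$ since, by \cite{RS}, the Gelfand spectrum of $A_\Phi(G)$ is homeomorphic to $G$).

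The key step is a localisation argument. Fix $u\in A_\Phi(G)$ and consider $v:=u-u(x)w$, where $w\in A_\Phi(G)$ is chosen, using the regularity of $A_\Phi(G)$, to equal $1$ on a neighbourhood of $x$ with compact support; then $v(x)=0$. By regularity again, pick $w'\in A_\Phi(G)$ equal to $1$ on a neighbourhood $W$ of $x$ with $\mathrm{supp}(w')$ disjoint from $\{z: v(z)\neq 0\}$, so that $vw'=0$. Then for any $\varphi\in X$ and $x^*\in X^*$ one has $T_{x^*,\pi(vw')\varphi}=w'.T_{x^*,\pi(v)\varphi}=0$, and since $w'=1$ on $W$ this gives $\mathrm{supp}(T_{x^*,\pi(v)\varphi})\subseteq W^c$; thus $\pi(v)\varphi\in X_{W^c}$. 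But $\pi(v)\varphi\in X_{\{x\}}$ as well (since $X=X_{\{x\}}$ is $\pi$-invariant), and $x\notin W^c$, so by Lemma \ref{SSProp}(ii) we get $\pi(v)\varphi\in X_{\{x\}\cap W^c}=X_\emptyset=\{0\}$. Hence $\pi(v)=0$, i.e. $\pi(u)=u(x)\pi(w)$ for every $u\in A_\Phi(G)$.

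It remains to identify $\pi(w)$. Applying the last identity to $u=w$ gives $\pi(w)=w(x)\pi(w)=\pi(w)$ (since $w(x)=1$), which is vacuous, so instead I argue via the multiplicativity of $\pi$: for $u_1,u_2\in A_\Phi(G)$, $\pi(u_1u_2)=\pi(u_1)\pi(u_2)$, and using $\pi(u_i)=u_i(x)\pi(w)$ together with $(u_1u_2)(x)=u_1(x)u_2(x)$ we get $u_1(x)u_2(x)\pi(w)=u_1(x)u_2(x)\pi(w)^2$, so $\pi(w)=\pi(w)^2$ on the range, and combined with non-degeneracy (the span of $\{\pi(u)\varphi\}$ is dense, hence $\pi(w)$ acts as the identity there since $\pi(w)\pi(u)\varphi=\pi(wu)\varphi=\pi(u)\varphi$ whenever $w=1$ near $\mathrm{supp}(u)$ — more carefully, for arbitrary $u$ one writes $\pi(w)\pi(u)\varphi=\pi(wu)\varphi=(wu)(x)\pi(w)\varphi=u(x)\pi(w)\varphi=\pi(u)\varphi$). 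Thus $\pi(w)=\mathrm{id}_X$ and $\pi(u)=u(x)\,\mathrm{id}_X$, so $\pi$ is the character $\delta_x$.

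I expect the only mildly delicate point to be the clean identification of $\pi(w)$ with the identity; the localisation step itself is a direct transcription of the argument in the proof of Theorem \ref{TSSS} (and of \cite[Proposition 3]{PP}), and all the needed regularity and support properties are already available from Lemma \ref{supp_prop} and \cite{RS}. There is no serious analytic obstacle here — everything reduces to the structure theory already established.

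\begin{proof}
By Theorem \ref{TSSS}, there exists $x\in G$ such that $X_{\{x\}}=X$. We show that $\pi$ is the character $\delta_x$ given by evaluation at $x$ (this is indeed a character of $A_\Phi(G)$ since, by \cite[Theorem 3.4]{RS}, the Gelfand spectrum of $A_\Phi(G)$ is homeomorphic to $G$).

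Fix $u\in A_\Phi(G)$. Since $A_\Phi(G)$ is regular and tauberian, choose $w\in A_\Phi(G)$ with $w=1$ on an open neighbourhood of $x$ and $\mathrm{supp}(w)$ compact, and set $v:=u-u(x)w$, so that $v(x)=0$. Let $V:=\{z\in G: v(z)\neq 0\}$; then $x\notin\overline{V}$. By regularity again, choose $w'\in A_\Phi(G)$ with $w'=1$ on an open set $W$ containing $x$ and $\mathrm{supp}(w')\cap V=\emptyset$; then $vw'=0$.

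Let $\varphi\in X$ and $x^*\in X^*$. Since $T_{x^*,\pi(vw')\varphi}=w'.T_{x^*,\pi(v)\varphi}$ and $vw'=0$, we have $w'.T_{x^*,\pi(v)\varphi}=0$; as $w'=1$ on $W$, it follows that $\mathrm{supp}(T_{x^*,\pi(v)\varphi})\subseteq W^c$. Hence $\pi(v)\varphi\in X_{W^c}$. On the other hand, $X_{\{x\}}$ is $\pi$-invariant and equals $X$, so $\pi(v)\varphi\in X_{\{x\}}$. By Lemma \ref{SSProp}(ii), $\pi(v)\varphi\in X_{\{x\}\cap W^c}=X_{\emptyset}=\{0\}$, the last equality by Lemma \ref{SSProp}(i). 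Since $\varphi$ was arbitrary, $\pi(v)=0$, i.e.
\begin{equation}\label{eq:scalar}
\pi(u)=u(x)\,\pi(w)\qquad\text{for all }u\in A_\Phi(G).
\end{equation}

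It remains to show $\pi(w)=\mathrm{id}_X$. For $u\in A_\Phi(G)$ and $\varphi\in X$, using \eqref{eq:scalar} and multiplicativity of $\pi$,
\begin{equation*}
\pi(w)\pi(u)\varphi=\pi(wu)\varphi=(wu)(x)\,\pi(w)\varphi=u(x)\,\pi(w)\varphi=\pi(u)\varphi,
\end{equation*}
since $w(x)=1$ and $u(x)\pi(w)\varphi=\pi(u)\varphi$ by \eqref{eq:scalar}. Thus $\pi(w)$ acts as the identity on $\{\pi(u)\varphi: u\in A_\Phi(G),\ \varphi\in X\}$, whose closed span is $X$ by non-degeneracy of $\pi$. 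Since $\pi(w)$ is bounded, $\pi(w)=\mathrm{id}_X$, and \eqref{eq:scalar} becomes $\pi(u)=u(x)\,\mathrm{id}_X$ for all $u\in A_\Phi(G)$. Hence $\pi$ is the character $\delta_x$.
\end{proof}
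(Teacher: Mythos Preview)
Your localisation argument has a gap at the step where you claim $x\notin\overline{V}$ for $V=\{z:v(z)\neq 0\}$ with $v=u-u(x)w$. From $w=1$ on a neighbourhood $U$ of $x$ you only get $v(z)=u(z)-u(x)$ for $z\in U$, so $v(x)=0$, but $v$ need not vanish on any neighbourhood of $x$; hence $x$ may well lie in $\overline{V}$, and you cannot in general choose $w'$ with $w'=1$ near $x$ and $\mathrm{supp}(w')\cap V=\emptyset$. As written, your argument only establishes $\pi(v)=0$ for $v$ vanishing on a \emph{neighbourhood} of $x$, not for all $v$ with $v(x)=0$, so the identity $\pi(u)=u(x)\pi(w)$ does not follow.

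The missing ingredient is that $\{x\}$ is a set of spectral synthesis for $A_\Phi(G)$ \cite[Theorem 3.6(i)]{RS}, which is precisely what bridges ``$v(x)=0$'' and ``$v$ vanishes near $x$''. This is the route the paper takes: from $X_{\{x\}}=X$ one has $\mathrm{supp}(T_{x^*,\varphi})\subseteq\{x\}$ for all $\varphi,x^*$, and synthesis of singletons forces $T_{x^*,\varphi}=c\,\delta_x$ for some scalar $c$ (depending on $\varphi,x^*$). This immediately gives $\langle\pi(v)\varphi,x^*\rangle=c\,v(x)$ and hence $\pi(v)=v(x)\pi(u)$ for any fixed $u$ with $u(x)=1$; the identification of $\pi(u)$ with the identity then proceeds exactly as in your final paragraph. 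Your approach can be repaired by inserting the synthesis step (equivalently, by approximating $v$ in $A_\Phi(G)$ by functions vanishing near $x$ and using continuity of $\pi$), but without it the crucial scalar identity is not established.
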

\begin{proof}
By Theorem \ref{TSSS}, there exists $x\in G$ such that $X_{\{x\}}=X,$ i.e., $supp(T_{x^*,\varphi})\subset\{x\}$ for all $\varphi\in X$ and $x^*\in X^*.$ As singletons are sets of spectral synthesis for $A_\Phi(G)$ \cite[Theorem 3.6 (i)]{RS}, it follows that \begin{equation}\label{SS}
T_{x^*,\varphi}=c\delta_x
\end{equation} for some $c\in\mathbb{C}.$ Let $u\in A_\Phi(G)$ such that $u(x)=1.$ Then \begin{equation}\label{c}
c=c\langle u,\delta_x\rangle=\langle u,c\delta_x\rangle=\langle u,T_{x^*,\varphi}\rangle=\langle \pi(u)\varphi,x^*\rangle.
\end{equation}

We now claim that $\pi$ is a character. Let $v\in A_\Phi(G).$ Then, for $\varphi\in X$ and $x^*\in X^*,$ we have 
\begin{align*}
\langle \pi(v)\varphi,x^* \rangle =& \langle v, T_{x^*,\varphi} \rangle = \langle v, c\delta_x \rangle\ (\mbox{by }(\ref{SS})) \\ =& c\langle v, \delta_x \rangle = \langle \pi(u)\varphi,x^*\rangle\langle v, \delta_x \rangle\ (\mbox{by }(\ref{c}))\\ =& v(x)\langle \pi(u)\varphi,x^*\rangle = \langle v(x)\pi(u)\varphi,x^*\rangle.
\end{align*}
Since $\varphi$ and $x^*$ are arbitrary, it follows that $\pi(v)=u(x)\pi(u).$ Now $$\pi(u)=u(x)\pi(u)=u^2(x)\pi(u)=\pi(u^2)=\pi(u)^2,$$ i.e., $\pi(u)$ is a projection.  As $\pi$ is non-degenerate, it follows that $\pi(u)$ is the identity operator $I$ on $X.$ Thus $$\pi(v)=v(x)I\ \forall\ v\in A_\Phi(G),$$ i.e., $\pi$ is a character. 
\end{proof}

\section*{Acknowledgement}
The first author would like to thank the University Grants Commission, India, for research grant.

\end{document}